\theoremstyle{plain}
\numberwithin{equation}{section}
\newtheorem{thm}{Theorem}[section]
\newtheorem{prop}[thm]{Proposition}
\newtheorem{cor}[thm]{Corollary}
\newtheorem{lem}[thm]{Lemma}
\theoremstyle{definition}
\newtheorem{dfn}[thm]{Definition}
\newtheorem{exm}[thm]{Example}
\newtheorem{rem}[thm]{Remark}
\def\dim{\mathop{\mathrm{dim}}\nolimits}
\def\Ad{\mathop{\mathrm{Ad}}\nolimits}
\def\Cl{\mathop{\mathrm{Cl}}\nolimits}
\def\h{\mathcal{H}}
\def\CC{\mathbb{C}}
\def\BB{\mathbf{B}}
\def\QQ{\mathbb{Q}}
\def\RR{\mathbb{R}}
\def\ZZ{\mathbb{Z}}
\def\gr{\mathrm{Gr}}
\def\calB{\mathcal{B}}
\def\calM{\mathcal{M}}
\def\cl{\mathrm{cl}}
\def\frakg{\mathfrak{g}}
\def\nn{\mathbf{n}}
\def\hh{\mathbf{h}}
\def\Im{\mathop{\mathrm{Im}}\nolimits}
\def\bs{\backslash}
\def\Aut{\mathop{\mathrm{Aut}}\nolimits}
\def\Span{\mathrm{span}}
\def\Sym{\mathrm{Sym}}
\def\ev{\mathrm{ev}}
\def\od{\mathrm{od}}
\def\tr{\mathrm{tor}}
\def\tor{\mathrm{tor}}
\def\Sat{\mathrm{Sat}}
\def\scrH{\mathscr{H}}
\def\rt-m{\sqrt{m}}
\def\00{\mathbf{0}}
\def\Ker{\mathop{\mathrm{Ker}}\nolimits}
\title[Kato-Usui partial compactifications]{Kato-Usui partial compactifications\\ over the toroidal compactifications of Siegel spaces}
\author[T.~Hayama]{Tatsuki Hayama}
\address{Mathematical Sciences Center, Tsinghua University, Haidian District, Beijing 100084, China}
\email{tatsuki@math.tsinghua.edu.cn}
\dedicatory{dedicated to Professor Phillip A. Griffiths}
\date{\today}
\subjclass[2000]{32G20.}
\keywords{degenerating Hodge structure; period domain; toroidal compactification}
\begin{document}
\maketitle
\begin{abstract}
We construct fans which give Kato-Usui partial compactifications of period domains of weight $-1$.
Similarly to the fans of toroidal compactifications, these fans are given by polyhedral decompositions.
We also demonstrate a fibration structure for some kinds of Kato-Usui boundary components.
\end{abstract}

\section{Introduction}
Let $\scrH$ be a Siegel upper half space, and let $\Gamma$ be an arithmetic subgroup.
By Mumford, et al. \cite{AMRT}, we have a toroidal compactification $\Gamma\bs \scrH_{\Sigma_{\tor}}$ 
by taking a suitable fan $\Sigma_{\tor}$.
Moreover, Carlson, Cattani and Kaplan \cite{CCK} showed a relationship between the boundary 
of $\Gamma\bs \scrH_{\Sigma_{\tor}}$ and degenerating Hodge structures.
These boundary points correspond to the nilpotent orbits, which generate limiting mixed Hodge 
structures by work of Schmid \cite{S}.

Generalizing the Siegel case, i.e. the case for Hodge structures of weight $1$, Kato and Usui \cite{KU} have introduced a construction to make partial compactifications of period domains.
Let $D$ be a period domain of (polarized pure) Hodge structure, as introduced by Griffiths \cite{G}.
Then the real Lie group $G_{\RR}$ acts on $D$ transitively.
A boundary point of the Kato-Usui partial comapactification is a nilpotent orbit generated by a nilpotent cone in $\frakg$.
Let $\Gamma$ be a subgroup of $G_{\ZZ}$, and let $\Sigma$ be a fan of nilpotent cones which is (strongly) 
compatible with $\Gamma$.
We can define the set $\Gamma\bs D_{\Sigma}$ of $\sigma$-nilpotent orbits for $\sigma\in \Sigma$ modulo $\Gamma$.
Here $\Gamma\bs D_{\Sigma}$ has a log geometrical structure and is a moduli space of log Hodge structures if $\Gamma$ is neat.

In the Siegel case, $\Sigma_{\tor}$ is compatible with an arithmetic subgroup $\Gamma$ and covers all possible nilpotent cones.
Beyond the Siegel case, however, we do not know how to construct such a big fans except for a few examples. In this paper, 
we construct a fan which covers all possible nilpotent cones of a special type.

\subsection{Even maps and odd maps}
We review \cite{H} to state our result in this paper.
Let $D$ be a period domain of weight $-1$, i.e. odd-weight by twisting.
Then $G_{\RR}$ is isomorphic to the symplectic group.
Here an isotropy subgroup $L$ of $G_{\RR}$ is compact.
Let $K$ be the maximal subgroup of $G_{\RR}$ containing $L$.
We then have the natural quotient map
$$
D\cong G_{\RR}/L\to G_{\RR}/K.
$$
Now $K$ is isomorphic to the unitary group.
Then $G_{\RR}/K$ can have a complex structure so that it is isomorphic to the 
Siegel space $\scrH$ or its complex conjugate $\bar{\scrH}$.
This map is compatible with the action of $G_{\ZZ}$.
Then we have the map 
\begin{align}\label{proj}
\Gamma\bs D\to \Gamma\bs \scrH,\quad \Gamma\bs D\to \overline{\Gamma\bs \scrH}
\end{align}
 for a subgroup $\Gamma$ of $G_{\ZZ}$.
 
For $F\in D$ we have the Hodge decomposition $H_{\CC}=\bigoplus _{p}H^{p,-p-1}$.
Regarding Hodge structures, the map $D\to \scrH$ (resp. $D\to \bar{\scrH}$) is given by
\begin{align}\label{int_D}
F\mapsto \bigoplus_{p:\text{even}} H^{p,-p-1}\quad (\text{resp. } \bigoplus_{p:\text{odd}} H^{p,-p-1}).
\end{align}
The abelian variety associated to the image of the above map is called the Weil intermediate Jacobian.
By taking Weil intermediate Jacobians, we have the map
\begin{align}\label{HStoA}
\{\text{Hodge structures of weight }-1\}\to \{\text{abelian varieties}\}.
\end{align}
Let $(W,F)$ be a $\ZZ$-mixed Hodge structure satisfying the following condition:
\begin{align}\label{MHScond}
&\gr^{W}_{p}=0 \text{ if } p\neq 0,-1,-2 \text{ and}\\
&\text{the Hodge structure on }\gr^{W}_{-1}\text{ is polarizable.}\nonumber
\end{align} 
Now we have the Deligne decomposition $H_{\CC}=\bigoplus_{p,q} I^{p,q}$.  
Then we may define a filtration $\tilde{F}$ such that
$$\tilde{F}^{-1}=H_{\CC},\quad \tilde{F}^0=(\bigoplus_{p:\text{ even}}I^{p,-p-1})\oplus (\bigoplus_p I^{p,-p}),\quad \tilde{F}^{1}=0.$$
Therefore $(W,\tilde{F})$ is a $\ZZ$-mixed Hodge structure of type (\ref{MHScond}) where 
$$h^{p,q}=0\quad \text{for }(p,q)\neq (0,0), (-1,0), (0,-1), (-1,-1).$$
By \cite{B}, $(W,\tilde{F})$ is a 1-motive.
Hence the map given by $(W,F)\mapsto (W,\tilde{F})$ induces
\begin{align}\label{MHStoMot}
\{\text{mixed Hodge structures of type (\ref{MHScond})}\}\to \{\text{1-motives}\},
\end{align}  
which is a generalization of the map (\ref{HStoA}).

Let $(\sigma,\exp{(\sigma_{\CC})}F)$ be a nilpotent orbit such that $N^2=0$ for $N\in\sigma$.
Then the limiting mixed Hodge structure$(W(\sigma),F)$ is a mixed Hodge structure of type (\ref{MHScond}).
By (\ref{MHStoMot}), we have a 1-motive $(W(\sigma),\tilde{F})$.
We say $\sigma$ is even-type (resp. odd-type) if $(\sigma,\exp{(\sigma_{\CC})}\tilde{F})$ (resp.  $(\sigma,\exp{(\sigma_{\CC})}\bar{\tilde{F}})$) is a nilpotent orbit for $\scrH$ (resp.$\bar{\scrH}$).
By Lemma \ref{even-odd}, this definition coincides with that defined in \cite{H} (Definition \ref{ev-od}).
For an even-type (resp. odd-type) cone $\sigma$, we have a map $D_{\sigma}\to \scrH_{\sigma}$ (resp.  $D_{\sigma}\to \bar{\scrH}_{\sigma}$) given by
\begin{align}\label{tilde}
(\sigma,\exp{(\sigma_{\CC})}F)\mapsto (\sigma,\exp{(\sigma_{\CC})}\tilde{F}) \quad (\text{resp. }(\sigma,\exp{(\sigma_{\CC})}\bar{\tilde{F}})),
\end{align}
where the map does not depend on the choice of $F$.
We denote the above map by $p^{\ev}$ (resp. $p^{\od}$).
The restriction $p^{\ev}|_D$ (resp. $p^{\od}|_D$) coincides with the one given by (\ref{int_D}). 
A $\sigma$-nilpotent orbit $(\sigma,\exp{(\sigma_{\CC})F})$ defines $\gr^{W(\sigma)}_{-1}F$ which does not depend on the choice of $F$ if $\sigma$ is even-type or odd-type.
The abelian variety associated to $\gr^{W(\sigma)}_{-1}\circ p^{\ev}(\sigma,\exp{(\sigma_{\CC})F})$ is the Weil intermediate Jacobian attached to $\gr^{W(\sigma)}_{-1}(\sigma,\exp{(\sigma_{\CC})F})$.

Since the monodromy representation acts by a real transformation, it commutes with the map $p^{\ev}$ (resp. $p^{\od}$).
For a fan $\Sigma$ of even-type (resp. odd-type) cones which is compatible with $\Gamma$, we then have the map
$$p^{\ev}:\Gamma\bs D_{\Sigma}\to \Gamma\bs \scrH_{\Sigma},\quad(\text{resp. }p^{\od}:\Gamma\bs D_{\Sigma}\to \overline{\Gamma\bs \scrH}_{\Sigma})$$
as an extension of the map (\ref{proj}).

While a toroidal partial compactification $\Gamma\bs \scrH_{\Sigma}$ is an analytic space, $\Gamma\bs D_{\Sigma}$ is not an analytic space but a log manifold.
A log manifold is a subspace of a log-analytic space defined by log differential forms.
The topology is the strong topology defined in \cite{KU}, which is stronger than the subspace topology of the analytic space.
We showed the continuity of $p^{\ev}$ and $p^{\od}$ in the $(1,1,1,1)$-case in \cite[\S \ref{exm}]{H}, however it is not known in more general settings.
We expect that these maps are continuous and morphisms of log structures.

We have not given a construction of fans in \cite{H}.
In this paper, I will construct of fans of even-type which are compatible with an arithmetic subgroup $\Gamma$.
By \cite{AMRT}, there exists a fan $\Sigma_{\tor}$ which makes $\Gamma\bs \scrH_{\Sigma_{\tor}}$ compact.
This fan is given by a family of polyhedral decompositions of the open cones of positive symmetric bilinear forms.
Now $\sigma\in\Sigma_{\tor}$ is even-type if  there exists a $\sigma$-nilpotent orbit, in which case 

\begin{align}\label{tor}
p^{\ev}:\Gamma\bs D_{\Sigma_{\tor}}\to\Gamma\bs \scrH_{\Sigma_{\tor}}.
\end{align}
However there might not exist a $\sigma$-nilpotent orbit (i.e., $D_{\sigma}=D$).
The map (\ref{tor}) is not necessarily surjective.
We will give a subfan $\Sigma_{\ev}\subset \Sigma_{\tor}$ of even-type cones and prove the map 
$$p^{\ev}:\Gamma\bs D_{\Sigma_{\ev}}\to \Gamma\bs \scrH_{\Sigma_{\ev}}\subset \Gamma\bs \scrH_{\Sigma_{\tor}}$$
is surjective.

In contrast to even-type, an odd-type fan is not obtained by the same construction of $\Sigma_{\ev}$ in general.
In fact, we will show that the odd-type fan does not contain a polyhedral decomposition in the $(1,1,1,1)$-case although $\Sigma_{\ev}$ is a set of polyhedral
decompositions.
See \S \ref{exm}.

\subsection{Relation to cycle spaces} 
We have the Satake compactification $\Gamma\bs \scrH_{\Sat}$ and the map 
$\zeta: \Gamma\bs \scrH_{\Sigma}\to \Gamma\bs \scrH_{\Sat}$.
We showed that $\zeta\circ p^{\ev}$ and $\bar{\zeta}\circ p^{\od}$ have a relation with cycle spaces in \cite{H}.

In this situation, where $G_{\RR}$ is a symplectic group,  the cycle space $\calM_D$ is isomorphic to $\scrH\times \bar{\scrH}$.
For an even-type (resp. odd-type) nilpotent orbit $(\sigma, \exp{(\sigma_{\CC})}F)\in D_{\Sigma}$, we have the $1$-variable SL(2)-orbit $(\rho,\phi)$ such that
$$\rho_*(\nn_-)=N,\quad \phi(0)=\hat{F},$$  
where $N$ is in the relative interior $\sigma^{\circ}$, $(W(\sigma),\hat{F})$ is the $\RR$-split MHS associated to the limiting mixed Hodge structure $(W(\sigma),F)$, and $(\nn_-,\hh,\nn_+)$ is the standard generator of $sl_2(\RR)$.
Now $\phi(i)\in D$ and we have the cycle $C_0\in\calM_D$ associated to $\phi(i)$.
We have 
$$X:=\frac{1}{2}\rho_*(i\nn_--\hh+i\nn_+),$$
which is in the $(-1,1)$-component of the Hodge decomposition of $\frakg_{\CC}$ with respect to $\phi(i)$.
We showed that $e^XC_0$ is in the closure $\calM_D^{\cl}$ of $\calM_D$.
Now $\scrH$ is isomorphic to the bounded domain $\calB$.
Then we have $\calM^{\cl}_D=\calB^{\cl}\times\bar{\calB}^{\cl}$ and the two projections
\begin{align*}
\pi^{\ev}:\calM_{D}^{\cl} \to \calB^{\cl},\quad \pi^{\od}:\calM_{D}^{\cl} \to \bar{\calB}^{\cl}.
\end{align*}
We showed $\pi^{\ev}(e^XC_0)$ (resp. $\pi^{\od}(e^XC_0)$) is in a Satake boundary component of $\calB$ and that 
$$\pi^{\ev}(e^XC_0)=\zeta\circ p^{\ev}(\sigma, \exp{(\sigma_{\CC})}F),\quad (\text{resp. }\pi^{\od}(e^XC_0)=\bar{\zeta}\circ p^{\od}(\sigma, \exp{(\sigma_{\CC})}F))$$  
in $\Gamma\bs \scrH_{\Sat}$.
Here $\pi^{\ev}(e^XC_0)$ (resp. $\pi^{\od}(e^XC_0)$) does not depend on the choice of $F$ and $N$.

\subsection{Boundary component structure}
As a generalization of period domains, Green, Griffiths and Kerr introducted Mumford-Tate domains \cite{GGK}.
In the same manner as the Kato-Usui partial compactifications, Kerr and Pearlstein \cite{KP} have constructed partial compactifications of Mumford-Tate domains.
Mumford-Tate domains are not algebraic in the non-classical situation \cite{GRT}, however the boundary components 
are sometimes ``arithmetic''.
In fact, Carayol \cite{C} studied the case where the Mumford-Tate group is $SU(2,1)$, which is also treated in the Griffiths Lectures \cite{G2}, and he showed that boundary components are isomorphic either to $\CC^*$, a CM elliptic curve, or its complex conjugate.
Here certain spaces of automorphic cohomology classes are isomorphic to the space of certain parabolic Picard modular forms by a Penrose-type transformation, and a Picard modular form has a Fourier expansion around a cusp whose coefficients are Theta functions defined over the CM elliptic curve.
These automorphic classes have generalized Fourier expansions similar to the Picard modular forms \cite[Lecture 10]{G2}.

To extend this result to general Mumford-Tate domains, Kerr and Pearlstein studied the geometric structure of boundary components of Mumford-Tate domains and showed that a boundary component has an ``arithmetic'' property if it satisfies some conditions.
In this paper, we will study a boundary component structure of period domains for even-type cones.

Let $\BB(\sigma)$ be the boundary component for $\sigma$.
If $\dim{\Im{N}}\leq h^{0,-1}$ for $N\in\sigma^{\circ}$, then $p^{\ev}|_{\BB(\sigma)}$ is surjective (Proposition \ref{surj}).
We will define a subspace $\BB'(\sigma)\subset \BB(\sigma)$ and show that the restriction  $p^{\ev}|_{\BB'(\sigma)}$ is a real-analytic fibration, therefore $\zeta\circ p^{\ev}|_{\BB'(\sigma)}$ is a fibration over the Satake boundary component.
In particular, $\BB'(\sigma)$ coincides with $\BB(\sigma)$ if $D$ is a period domain for a Hodge structure of level $3$.
Using this, we will give an estimate of the dimension of $\BB(\sigma)$.
Let $\Gamma_{\sigma}$ be the subgroup of $\Gamma$ stabilizing $\sigma$.
The map $\zeta\circ p^{\ev}$ gives the following theorem:  
\begin{thm}
$\Gamma_{\sigma}\bs \BB'(\sigma)$ is real-analytically fibered over a Siegel modular variety.
\end{thm}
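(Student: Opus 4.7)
The plan is to deduce the theorem from the real-analytic fibration structure of $p^{\ev}|_{\BB'(\sigma)}$ (to be established in the previous proposition) together with the standard behavior of the Baily--Borel/Satake projection on Siegel toroidal boundary strata, and then descend everything to the $\Gamma_{\sigma}$-quotient.

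First, I would invoke the preceding proposition to obtain that $p^{\ev}|_{\BB'(\sigma)}: \BB'(\sigma) \to p^{\ev}(\BB'(\sigma))$ is a real-analytic fibration, with image lying in the boundary stratum of $\scrH_{\Sigma_{\ev}}$ corresponding to $\sigma$. Composing with the Satake map $\zeta$, this stratum projects to a Satake boundary component $\scrH_{\sigma}^{\Sat}$ of $\scrH$, which by the classical theory of Siegel's upper half space is itself a Siegel upper half space of lower genus $g' = g - \rank(N)$ for $N \in \sigma^{\circ}$. Thus the composition $\zeta \circ p^{\ev}|_{\BB'(\sigma)}$ is a real-analytic fibration onto (an open subset of) $\scrH_{\sigma}^{\Sat}$.

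Next, I would analyze the $\Gamma_{\sigma}$-action. Since $\Gamma_{\sigma}$ stabilizes $\sigma$, and since $p^{\ev}$ commutes with the $G_{\ZZ}$-action (the monodromy acts by real symplectic transformations, as noted just before the map (\ref{tor}) in the introduction), $\Gamma_{\sigma}$ acts on both source and target of $\zeta \circ p^{\ev}$. The action on $\scrH_{\sigma}^{\Sat}$ factors through the natural projection from the stabilizer of the isotropic flag determined by $N$ to the Levi quotient, whose image $\Gamma' := \im(\Gamma_{\sigma} \to \Aut(\scrH_{\sigma}^{\Sat}))$ is an arithmetic subgroup of the smaller symplectic group $Sp(2g',\QQ)$. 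The quotient $\Gamma' \bs \scrH_{\sigma}^{\Sat}$ is therefore a Siegel modular variety of genus $g'$.

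Finally, the descent: because $\Gamma_{\sigma}$ acts properly discontinuously on $\BB'(\sigma)$, the fibration $\BB'(\sigma) \to \scrH_{\sigma}^{\Sat}$ is $\Gamma_{\sigma}$-equivariant, and the kernel of $\Gamma_{\sigma} \to \Gamma'$ acts fiber-preservingly, we obtain a real-analytic fibration
\[
\Gamma_{\sigma} \bs \BB'(\sigma) \longrightarrow \Gamma' \bs \scrH_{\sigma}^{\Sat},
\]
which is the claimed fibration over a Siegel modular variety. The main obstacle will be the careful bookkeeping for the descent — specifically, verifying that the kernel of $\Gamma_{\sigma} \to \Gamma'$ acts properly on each fiber so that the induced fiber structure remains a real-analytic manifold, and identifying the exact arithmetic structure of $\Gamma'$ inside the Levi of the relevant rational parabolic. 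Both should follow from the standard theory of rational boundary components for $Sp(2g)$ combined with the explicit description of $\BB'(\sigma)$ from the preceding proposition.
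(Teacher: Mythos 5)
Your overall route is the same as the paper's: factor through $\zeta\circ p^{\ev}$, use the structure of the rational parabolic $P_S$ to see that the image $\Gamma'$ of the stabilizer in $\Aut{(\BB_{\Sat}(S))}\cong Sp(n-m,\RR)$ is arithmetic, and then descend to the quotients. The paper organizes this via the fibered product $\BB'(\sigma)=D'\times_{\BB_{\Sat}(\sigma)}\BB_{\tor}(\sigma)$ together with Namikawa's semi-direct product decomposition $P_S\cong W_S\rtimes(p_h(P_S)\times p_{\ell}(P_S))$, which makes the descent formal once one knows that $p'^{\ev}:\Gamma'\bs D'\to\Gamma'\bs\BB_{\Sat}(\sigma)$ is a fiber bundle with compact fiber $C'=U(n-m)/\prod_{p\geq 0}U(h'^{p,-p-1})$.

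However, the step you defer as ``bookkeeping'' is exactly the paper's one nontrivial lemma, and your diagnosis of where the difficulty sits is misplaced. The delicate point is not whether the kernel of $\Gamma_{\sigma}\to\Gamma'$ acts properly on the fibers (it does, by the standard toroidal theory applied to $\BB_{\tor}(\sigma)$, and it acts trivially in the $D'$-direction since $W_S$ acts trivially on $\gr^{W(\sigma)}_{-1}$); it is whether $\Gamma'$ acts \emph{freely} on $D'$, equivalently whether $\Gamma'$ meets the compact isotropy group $K'_0$ of a point of $D'$ only in the identity. If it did not, the compact fiber $C'$ would degenerate to an orbifold quotient over some points and local triviality would fail. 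The paper settles this using the hypothesis that $\Gamma$ is \emph{neat}, which your proposal never invokes: a nontrivial element of $\Gamma'\cap K'_0$ would generate an infinite subset of the compact group $K'_0$ admitting a convergent subsequence, contradicting proper discontinuity. Without neatness (or at least torsion-freeness of $\Gamma'$) the conclusion as a genuine real-analytic fibration is not available, so this hypothesis must enter your argument explicitly, and the lemma it feeds cannot be omitted.
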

A similar result is given by Kerr and Pearlstein \cite[Proposition 7.4]{KP}, where they show that certain boundary components are holomorphically fibered over Shimura varieties.
However the situation of the above theorem is different from their setting.
See Remark \ref{remKP}.

In addition, Kerr and Pearlstein showed a rigidity theorem for variations of Hodge structure.
By \cite[Proposition 10.6]{KP}, a limiting mixed Hodge structure determines a variation of Hodge structure.
Therefore limit points in boundary components have importance for variations of Hodge structure.
Moreover they showed finiteness of numbers of certain classes of variations of Hodge structure \cite[Theorem 10.4]{KP}.
It is expected that there is arithmetic significance at the limit points.
In fact, the limit point of the variation of Hodge structure for a quintic-mirror family at a maximal unipotent 
monodromy point is given by a value of a zeta function.

\subsection*{Acknowledgment}The author is grateful to the referee for his valuable advice and suggestions. 
This research is supported by the Mathematical Science Center, Tsinghua University.

\section{Construction of fans and their boundary component structures}
Let $H_{\ZZ}$ be a $\ZZ$-module of rank $2n$, and let $Q$ be a bilinear form on 
$H_{\ZZ}$ defined by
$$\begin{pmatrix}0&I\\-I&0\end{pmatrix}.$$ 
Let $\{h^{p,-p-1}\}_p$ be Hodge numbers for a Hodge structure of weight $-1$.
Then we have a period domain $D$ for $(H_{\ZZ},Q,\{h^{p,-p-1}\}_p)$.
Now 
$$G_{A}=\Aut{(H_{A},Q)}\cong Sp(n,A)$$
 for $A=\ZZ,\QQ,\RR,\CC$, and $G_{\RR}$ acts on $D$ transitively.
The isotropy subgroup $L$ and the maximal subgroup $K$ including $L$ are
$$L\cong \prod_p U(h^{p,-p-1}),\quad K\cong U(n).$$ 
We denote by $\scrH$ the Siegel space of degree $n$.
Then $\scrH\cong G_{\RR}/K$, and $\scrH$ is a period domain for 
$$(H_{\ZZ},Q,\{h^{0,-1}=h^{-1,0}=n\}).$$

\subsection{Polyhedral decompositions}
Let $S$ be an isotropic subspace of $H_{\RR}$ with respect to $Q$ defined over $\QQ$.
Now $0\leq\dim{S}\leq n$ and $S$ corresponds to the Satake boundary component of $\scrH$ (cf. \cite{N}).
For the Lie algebra $\frakg$ of $G_{\RR}$, we define the subalgebra
$$\eta(S)=\{N\in\frakg \; |\; \Im{N}\subset S\}.$$
For $X\in\frakg$, we have $Q(Xv,w)+Q(v,Xw)=0$ for $v,w\in H_{\RR}$ since $G_{\RR}$ preserves $Q$.
Then, for $N,N'\in \eta(S)$, 
$$Q(NN'v,w)=Q(N'v,Nw)=0$$
for all $v,w\in H_{\RR}$.
Therefore $N^2=0$ for $N\in \eta(S)$ and all elements in $\eta(S)$ commute with each other. 
We define
$$\eta^+(S)=\{N\in \eta\; |\; \phi_N>0 \text{ on }H_{\RR}/S^{\perp}\},$$
where $\phi_N$ is the bilinear form defined by $Q(\bullet , N\bullet)$, 
which defines a polarization on $H_{\RR}/S^{\perp}$ if $N$ generates a nilpotent orbit.

In this paper, every cone is assumed to be a finitely generated convex rational polyhedral cone.

\begin{dfn}[{\cite[Definition 7.3]{N}}]
Let $\Gamma$ be an arithmetic subgroup of $G_{\RR}$, and let $\Gamma(S)$ be the maximal subgroup of $\Gamma $ which stabilizes $S$.
A fan $\Sigma(S)$ of cones in the closure $\Cl{(\eta^{+}(S))}$ is said to be a $\Gamma(S)$-admissible polyhedral decomposition of $\eta^+(S)$ if:
\begin{itemize}
\item If $\sigma\in\Sigma(S)$ and $\gamma\in\Gamma(S)$, then $\gamma\sigma\in \Sigma(S)$;
\item There exists finitely many cones $\sigma_1,\ldots ,\sigma_{\ell}\in\Sigma(S)$ such that for any $\sigma\in\Sigma(S)$, there exists $\gamma\in \Gamma(S)$ with $\gamma\sigma=\sigma_j$  for some $j$;
\item $\eta^{+}(S)=\bigcup_{\sigma\in\Sigma(S)}(\sigma\cap \eta^{+}(S))$.
\end{itemize} 

Moreover a union $\Sigma=\bigcup_{Q(S,S)=0}\Sigma(S)$ of $\Gamma(S)$-admissible polyhedral decompositions is said to be a $\Gamma$-admissible family if:
\begin{itemize}
\item $\gamma\Sigma(S)=\Sigma(\gamma S)$;
\item If $S'\subset S$, $\Sigma(S')$ is the restriction of $\Sigma(S)$ on $\Cl(\eta^+(S'))$. 
\end{itemize} 
\end{dfn}

The existence of a $\Gamma$-admissible family of polyhedral decompositions is proved by \cite{AMRT}.

\begin{exm}\label{decomp}
Let $e_1,\cdots ,e_{2n}$ be a symplectic basis of $H_{\ZZ}$ such that $Q(e_{n+j},e_{j})=1$ for $1\leq j\leq n$.
We define an isotropy subspace
$$S=\Span_{\RR}\{e_{2n-1},e_{2n}\}.$$
Then 
\begin{align*}
\eta(S)=\left\{\begin{array}{l|l}\begin{pmatrix}\LARGE{0}&\footnotesize{\begin{matrix}0&0\cr 0&X\cr \end{matrix}}\cr \LARGE{0}&\LARGE{0}\cr\end{pmatrix}& X\in\Sym(2,\RR)\end{array}\right\}\cong \Sym(2,\RR).
\end{align*}
Therefore a $\Gamma(S)$-admissible polyhedral decomposition of $\eta^+(S)$ is given by 
\begin{align*}&\Sigma(S)=\Gamma(S)\{\text{faces of }\sigma_0\},\\
&\text{where }\sigma_0=\RR_{\geq 0}\begin{pmatrix}1&0\cr 0&0\end{pmatrix}+\RR_{\geq 0}\begin{pmatrix}0&0\cr 0&1\end{pmatrix}+\RR_{\geq 0}\begin{pmatrix}1&-1\cr -1&1\end{pmatrix}
\end{align*}
(cf. \cite[Theorem 8.7]{N}).
Here $\Gamma\Sigma(S)$ is a $\Gamma$-admissible family.
\end{exm}

\subsection{Even-type and odd-type fans}
Let $\sigma$ be a nilpotent cone in $\frakg$.
Then $\sigma$ can be written as
$$\sigma=\sum_{1\leq j \leq \ell}\RR_{\geq 0}N_j$$
with nilpotents $N_1,\ldots ,N_{\ell}\in\frakg_{\QQ}$.
For $F$ in the compact dual $\check{D}$, we call $(\sigma,\exp{(\sigma_{\CC})F})$ a nilpotent orbit if it satisfies the following conditions:
\begin{itemize}
\item $\exp{(\sum_j ix_jN_j)}F\in D$ if $x_1,\ldots ,x_{\ell}\gg 0$;
\item $NF^p\subset F^{p-1}$ for any $p$ and $N\in \sigma$.
\end{itemize}
Let $(\sigma,\exp{(\sigma_{\CC})F})$ be a nilpotent orbit.
By \cite{CK}, each element $N$ of the interior $\sigma^{\circ}$ of $\sigma$ determines the same monodromy weight fltration $W=W(\sigma)$, which we center at weight $-1$ so that $(W,F)$ is a mixed Hodge structure.
Now we have the Deligne decomposition $H_{\CC}=\bigoplus_{p,q}I^{p,q}$ for $(W,F)$ where
$$W_k=\sum _{p+q\leq k}I^{p,q},\quad F^{\ell}=\sum_{p\geq \ell}I^{p,q}.$$ 

\begin{dfn}[\cite{H}]\label{ev-od}
A nilpotent orbit $(\sigma,\exp{(\sigma_{\CC})F})$ is called even-type (resp. odd-type) if it satisfies the following conditions:
\begin{itemize}
\item $N^2=0$ for $N\in \sigma$;
\item $I^{p,-p}=0$ for any odd (resp. even) integer $p$ with respect to the LMHS $(W,F)$.
\end{itemize}
A nilpotent cone $\sigma$ is called even-type (resp. odd-type) if $\sigma$ generates a nilpotent orbit and all $\sigma$-nilpotent orbits are even-type (resp. odd-type).
A fan $\Sigma$ is called even-type (resp. odd-type) if any cone of $\Sigma$ is even-type (resp. odd-type).
\end{dfn}

By the self-duality of monodromy weight filtration (\cite[Lemma 6.4]{S}),
$$W_{-2}^{\perp}=W_{-1}$$
and hence $W_{-2}$ is an isotropy subspace.
Moreover, we have the following lemma:

\begin{lem}\label{even-odd}
For the relative interior $\sigma^{\circ}$ of $\sigma$, $\sigma^{\circ}\subset \eta^{+}(W_{-2})$ (resp. $\sigma^{\circ}\subset -\eta^{+}(W_{-2})$) if and only if $\sigma$ is even-type (resp. odd-type).
\end{lem}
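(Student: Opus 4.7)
The plan is to reduce the statement to a positivity computation on the top graded piece $\gr^{W}_{0}$ of the monodromy weight filtration. First I would fix any $N\in\sigma^{\circ}$ and note that $W=W(\sigma)=W(N,-1)$ is determined by $\sigma$ alone. The hypothesis $N^{2}=0$ collapses $W$ to the three-step filtration $0\subset W_{-2}\subset W_{-1}\subset W_{0}=H_{\RR}$. Because $N$ is of type $(-1,-1)$ with respect to $W$, one has $N(W_{-1})\subset W_{-3}=0$ and $\Im N\subset W_{-2}$. The polarization of the LMHS further forces $N:\gr^{W}_{0}\to\gr^{W}_{-2}$ to be an isomorphism, so in fact $\Im N=W_{-2}$ and the inclusion $N\in\eta(W_{-2})$ holds automatically. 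Combined with the self-duality $W_{-2}^{\perp}=W_{-1}$ already noted in the excerpt, the question reduces to the sign of the induced form $\phi_{N}=Q(\bullet,N\bullet)$ on $H_{\RR}/W_{-1}\cong (\gr^{W}_{0})_{\RR}$.

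Next I would identify $\phi_{N}$ with the polarization that $N$ induces on the pure weight-$0$ Hodge structure $\gr^{W}_{0}$. Since $N:\gr^{W}_{0}\to\gr^{W}_{-2}$ is an isomorphism, every element of $\gr^{W}_{0}$ is $N$-primitive, so the Hodge--Riemann bilinear relations apply without restriction: the symmetric form $\langle u,v\rangle:=Q(u,Nv)$ polarizes the weight-$0$ Hodge structure on $\gr^{W}_{0}$. For a real vector $u\in(\gr^{W}_{0})_{\RR}$, writing it in its Deligne decomposition as $u=\sum_{p}u^{p,-p}$ with $u^{-p,p}=\overline{u^{p,-p}}$, the first and second Hodge--Riemann relations together give
\[\phi_{N}(u,u)=\sum_{p}\langle u^{p,-p},u^{-p,p}\rangle=\sum_{p}(-1)^{p}h_{p}(u^{p,-p}),\]
where each $h_{p}$ is a positive-definite Hermitian form on $I^{p,-p}$; the sign $(-1)^{p}$ is the Weil-operator positivity $i^{p-q}\langle v,\bar v\rangle>0$ specialized to $q=-p$.

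From this formula, $\phi_{N}>0$ on $(\gr^{W}_{0})_{\RR}$ iff $I^{p,-p}=0$ for every odd $p$, which is exactly the even-type condition of Definition~\ref{ev-od}; the odd-type case is the sign-reversed mirror. Since the positivity of $\phi_{N}$ depends only on $N$ and $Q$ and not on the choice of Hodge filtration $F$ giving the $\sigma$-nilpotent orbit, this equivalence automatically propagates across every $\sigma$-nilpotent orbit, matching the cone-level definition of even/odd-type. The main obstacle I anticipate is sign-convention bookkeeping: one must check carefully that the polarization on $\gr^{W}_{0}$ furnished by the LMHS agrees on the nose with $Q(\cdot,N\cdot)$ rather than a variant, and that the Weil-operator sign on weight $0$ really comes out as $(-1)^{p}$ under the conventions used throughout the paper.
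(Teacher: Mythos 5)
Your proposal is correct and follows essentially the same route as the paper: both reduce to the sign of $\phi_N=Q(\bullet,N\bullet)$ on $H_{\RR}/W_{-1}\cong(\gr^W_0)_{\RR}$ and apply the polarization (Hodge--Riemann) condition of the LMHS to get the sign $i^{2p}=(-1)^p$ on each $I^{p,-p}$, with real vectors $v+\bar v$ detecting the negative contribution of odd pieces in the converse direction. The only cosmetic difference is that you package the computation as a single signed sum of positive-definite Hermitian forms and add the (correct) remark that the conclusion is independent of the choice of $F$, which the paper leaves implicit.
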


\begin{proof}
If $\sigma$ is even-type (resp. odd-type), $N^2=0$ and then the weight filtration is given by
$$W_{0}=H_{\RR},\quad W_{-1}=\Ker{N},\quad W_{-2}=\Im{N},\quad W_{-3}=0$$
where $N\in \sigma^{\circ}$.
Therefore
$$H_{\CC}/(W_{-2,\CC})^{\perp}=H_{\CC}/W_{-1,\CC}\cong \bigoplus_{p:\text{even}} I^{p,-p}\quad (\text{resp. }\bigoplus_{p:\text{odd}} I^{p,-p}).$$
By the polarization condition of the LMHS, 
\begin{align}\label{pol}
i^{2p}Q(v,N\bar{v})=\begin{cases}Q(v,N\bar{v})>0&\text{if }p \text{ is even,}\\-Q(v,N\bar{v})>0&\text{if }p \text{ is odd,}\end{cases}
\end{align}
for $0\neq v\in I^{p,-p}$.
Then $\sigma^{\circ}\subset \eta^+(W_{-2})$ (resp. $\sigma^{\circ}\subset -\eta^+(W_{-2})$).

Moreover by the polarization condition, $Q(v,Nv)>0$ for $0\neq v\in I^{0,0}$ and by (\ref{pol}),
\begin{align}
Q(v+\bar{v},N(v+\bar{v}))=\begin{cases}2Q(v,N\bar{v})>0&\text{if }p \text{ is even,}\\2Q(v,N\bar{v})<0&\text{if }p \text{ is odd,}\end{cases}
\end{align}
for $0\neq v\in I^{p,-p}$ with $p\neq 0$.
Then $I^{p,-p}$ has to be $0$ for odd (resp. even) $p$ if $\sigma^{\circ}\subset \eta^{+}(W_{-2})$ (resp. $\sigma^{\circ}\subset -\eta^+(W_{-2})$). 
Therefore the lemma holds.
\end{proof}

\subsection{Maps to the toroidal compactifications}
We denote by $\BB(\sigma)$ (resp. $\BB_{\tr}(\sigma)$, $\bar{\BB}_{\tor}(\sigma)$)  the set of $\sigma$-nilpotent orbits for $D$ (resp. $\scrH$, $\bar{\scrH}$).
For an even-type (resp. odd-type) nilpotent cone $\sigma$, we have the map $p^{\ev}:\BB(\sigma)\to \BB_{\tr}(\sigma)$ (resp. $p^{\od}:\BB(\sigma)\to \bar{\BB}_{\tr}(\sigma)$) given by
(\ref{tilde}).

\begin{prop}\label{surj}
Let $\sigma$ be an even-type cone with $\dim{\Im{N}}\leq h^{0,-1}$ for all $N\in \sigma^{\circ}$.
Then $p^{\ev}:\BB(\sigma)\to\BB_{\tor}(\sigma)$ is surjective.
\end{prop}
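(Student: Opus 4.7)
The plan is to lift a given representative $(\sigma,\exp(\sigma_{\CC})F')$ of a class in $\BB_{\tor}(\sigma)$ to a $\sigma$-nilpotent orbit on $D$ by enriching the Hodge structure on $\gr^{W(\sigma)}_{-1}$. Since $(W,F')$ is a $1$-motive of type (\ref{MHScond}), with Deligne decomposition concentrated at $(0,0)$, $(-1,0)$, $(0,-1)$, $(-1,-1)$, the formula (\ref{tilde}) shows that producing a $D$-type LMHS $(W,F)$ with $\tilde F=F'$ amounts to constructing a Deligne decomposition $\{I^{p,q}\}$ on the same $W=W(\sigma)$ satisfying $I^{0,0}={I'}^{0,0}$, $I^{-1,-1}={I'}^{-1,-1}$, and $\bigoplus_{p\text{ even}}I^{p,-p-1}={I'}^{0,-1}$. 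By Cattani--Kaplan, any such polarized MHS then generates the desired $\sigma$-nilpotent orbit in $\BB(\sigma)$.

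To carry this out, let $d:=\dim\Im{N}$ for $N\in\sigma^{\circ}$, so that $\dim{I'}^{0,0}=d$ and $\dim{I'}^{0,-1}=n-d$. I would prescribe the Hodge number of the weight $-1$ PHS on $\gr^W_{-1}$ at type $(p,-p-1)$ to be $h^{p,-p-1}$ for $p\neq 0,-1$ and $h^{0,-1}-d$ for $p=0,-1$. These dimensions are nonnegative precisely under the hypothesis $d\leq h^{0,-1}$, and by Hodge symmetry they sum over even $p$ to $n-d=\dim{I'}^{0,-1}$. The Hermitian form $h(v,w):=iQ(v,\bar w)$ is positive definite on ${I'}^{0,-1}$ (from the $\scrH$-polarization of $\gr^W_{-1}$), so I would fix an $h$-orthogonal decomposition ${I'}^{0,-1}=\bigoplus_{p\text{ even}}V_p$ of the prescribed dimensions, set $I^{0,0}:={I'}^{0,0}$, $I^{-1,-1}:={I'}^{-1,-1}$, $I^{p,-p-1}:=V_p$ for $p$ even, $I^{p,-p-1}:=\overline{V_{-p-1}}$ for $p$ odd, all other $I^{p,q}:=0$, and define $F^a:=\bigoplus_{p\geq a,q}I^{p,q}$. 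Direct inspection of (\ref{tilde}) then yields $\tilde F^{0}={I'}^{0,0}\oplus{I'}^{0,-1}={F'}^{0}$, so $\tilde F=F'$ exactly.

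What remains is to verify that $(W(\sigma),F)$ is an $N$-polarized $\QQ$-MHS. The conjugation relations on $\gr^W_{-1}$ are exact (the potential error terms in Deligne's characterization lie in weights $\leq -3$, which are absent here), so $\overline{I^{p,-p-1}}=I^{-p-1,p}$ by construction; those on $\gr^W_0$ and $\gr^W_{-2}$ are inherited verbatim from $(W,F')$. The Hodge--Riemann relations on $\gr^W_{-1}$ follow from $h$-orthogonality of the $V_p$ and antisymmetry of $Q$, and the $N$-positivity $Q(v,N\bar v)>0$ on the primitive $I^{0,0}={I'}^{0,0}$ is inherited from $(W,F')$; the even-type condition $I^{p,-p}=0$ for odd $p$ holds by construction. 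The hard part will be the non-$\RR$-split case, where $\overline{I^{0,0}}$ and $I^{0,0}$ can genuinely differ modulo $I^{-1,-1}$; the resolution is that $I^{0,0}$ and $I^{-1,-1}$ have been copied verbatim from $(W,F')$, so the same twisting datum applies and the MHS axiom is preserved. This produces $(\sigma,\exp(\sigma_{\CC})F)\in\BB(\sigma)$ with $p^{\ev}$-image $(\sigma,\exp(\sigma_{\CC})F')$, proving surjectivity.
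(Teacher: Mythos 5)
Your proposal is correct and follows essentially the same route as the paper: fix the $I^{0,0}$ and $I^{-1,-1}$ pieces of the given Deligne splitting, refine the weight $-1$ polarized Hodge structure so that its even part is the prescribed $I^{0,-1}$ (possible exactly because the new Hodge numbers $h^{0,-1}-\dim\Im N$ are nonnegative), and reassemble the filtration. The only difference is cosmetic: where the paper cites surjectivity of the auxiliary even map $p'^{\ev}:D'\to\scrH'$ to produce the refined structure, you construct it explicitly via an $h$-orthogonal decomposition of $I^{0,-1}$, which is just an unwinding of that same surjectivity.
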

\begin{proof}
For a nilpotent orbit $(\sigma,\exp{(\sigma_{\CC})}F)\in \BB_{\tor}(\sigma)$, it is enough to show that there exists a nilpotent orbit $(\sigma,\exp{(\sigma_{\CC})}\breve{F})\in \BB(\sigma)$ such that the image through $p^{\ev}$ is $(\sigma,\exp{(\sigma_{\CC})}F)$.
Let $m=\dim{S}$.
Then the Deligne decomposition for $(W(\sigma),F)$ is given by
$$H_{\CC}=I^{0,0}\oplus I^{-1,0}\oplus I^{0,-1}\oplus I^{-1,-1}, \quad \dim{I^{0,0}}=m,\quad \dim{I^{-1,0}}=n-m.$$
Now $\overline{I^{-1,0}}=I^{0,-1}$, then $I^{-1,0}\oplus I^{0,-1}$ is defined over $\RR$.
Let $H'$ be the subspace of $H_{\RR}$ such that $H'_{\CC}=I^{-1,0}\oplus I^{0,-1}$.
Then $I^{-1,0}$ determines the point in the period domain for 
$$(H',Q|_{H'},\{h'^{-1,0}=h'^{0,-1}=n-m,\; 0\text{ otherwise}\})$$
which is isomorphic to the Siegel space $\scrH'$ of degree $n-m$.
Let $D'$ be the period domain for $(H',Q|_{H'},\{h'^{p,-1-p}\})$ where
$$h'^{p,-1-p}=\begin{cases}h^{p,-1-p}&\text{if }p\neq -1,0,\\h^{p,-1-p}-m&\text{if }p=-1,0.\end{cases}$$
We then have 
\begin{align}\label{prime}
\scrH'\cong Sp(n-m,\RR)/U(n-m),\quad D'\cong Sp(n-m,\RR)/\prod_p U(h'^{p,-p-1}),
\end{align}
and we have the even map $p'^{\ev}:D'\to \scrH'$.
Now $I^{0,-1}\in \scrH'$ and there exists $F'\in D'$ such that $p'^{\ev}(F')=I^{0,-1}$.
We define $\breve{F}\in \check{D}$ by 
\begin{align}\label{breve}
\breve{F}^p=\begin{cases}F'^{p}&\text{if }p>0\\F'^{p}\oplus I^{0,0}&\text{if }p=0\\F'^{p}\oplus I^{0,0}\oplus I^{-1,-1}&\text{if }p<0.\end{cases}
\end{align}\label{brave}
Then $(W(\sigma),\breve{F})$ is a LMHS such that
$$\gr^{W(\sigma)}_{0,\CC}\cong I^{0,0},\quad \gr^{W(\sigma)}_{-1}(\breve{F})\cong F',\quad \gr^{W(\sigma)}_{-2,\CC}\cong I^{-1,-1}.$$
Therefore $(\sigma,\exp{(\sigma_{\CC})}\breve{F})$ is a $\sigma$-nilpotent orbit for $D$ and the image through $p^{\ev}$ is $(\sigma,\exp{(\sigma_{\CC})}F)$. 
\end{proof}
\begin{cor}
If $h^{0,-1}\geq 2$, $\Sigma(S)$ and $\Gamma\Sigma(S)$ of Example \ref{decomp} is an even-type fan. 
\end{cor}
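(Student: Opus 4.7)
The plan is to verify, for every cone $\sigma \in \Sigma(S)$, the two requirements of Definition \ref{ev-od}: that $\sigma$ generates a nilpotent orbit (so it is a nilpotent cone) and that $\sigma^{\circ} \subset \eta^{+}(W_{-2})$ (which, by Lemma \ref{even-odd}, then forces every $\sigma$-nilpotent orbit to be even-type). First I would fix a nontrivial $\sigma \in \Sigma(S)$ and an interior point $N \in \sigma^{\circ}$. Since $\sigma \subset \Cl(\eta^{+}(S))$ and $N \in \eta(S)$, we have $N^{2}=0$, so the monodromy weight filtration centered at $-1$ takes the form $W_{-1} = \Ker N$ and $W_{-2} = \Im N \subset S$; the symplectic condition yields the identification $\Ker N = (\Im N)^{\perp}$.

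For the first requirement, $\BB_{\tor}(\sigma)$ is nonempty because $\Sigma(S)$ is part of the toroidal fan for the Siegel space. Picking any $(\sigma, \exp(\sigma_{\CC})F) \in \BB_{\tor}(\sigma)$, the integer $m = \dim W_{-2} = \dim I^{0,0}$ satisfies $m \leq \dim S = 2 \leq h^{0,-1}$. Consequently the auxiliary period domain $D'$ appearing in the proof of Proposition \ref{surj} has all Hodge numbers non-negative and is nonempty, and the construction of $\breve{F}$ given there produces $(\sigma, \exp(\sigma_{\CC})\breve{F}) \in \BB(\sigma)$. Hence $\sigma$ is a nilpotent cone.

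For the inclusion $\sigma^{\circ} \subset \eta^{+}(W_{-2})$, I would observe that $\phi_{N}(v,w) = Q(v,Nw)$ is a symmetric bilinear form on $H_{\RR}$ whose null space is exactly $(\Im N)^{\perp}$, since $Q(v,Nw)=0$ for all $w$ iff $v$ annihilates $\Im N$ under $Q$. Because $N \in \Cl(\eta^{+}(S))$, $\phi_{N}$ is positive semidefinite on $H_{\RR}/S^{\perp}$ with radical $(\Im N)^{\perp}/S^{\perp}$, so descending to $H_{\RR}/W_{-2}^{\perp} = H_{\RR}/(\Im N)^{\perp}$ produces a positive definite form, i.e., $N \in \eta^{+}(W_{-2})$. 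By Lemma \ref{even-odd} the cone $\sigma$ is then even-type.

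Finally, $\Gamma$-admissibility decomposes $\Gamma\Sigma(S)$ as the union of the $\Gamma$-translates $\gamma\Sigma(S) = \Sigma(\gamma S)$, and both conditions verified above are manifestly preserved by the $G_{\RR}$-action, so the same verification carries over to every cone of $\Gamma\Sigma(S)$. The step I expect to require the most care is the rank-drop case, where $N$ sits on a proper face of $\sigma_{0}$ and has rank one, so $W_{-2}$ is a proper subspace of $S$ and $\phi_{N}$ is only positive semidefinite on $H_{\RR}/S^{\perp}$; the key observation there is simply that a positive semidefinite symmetric form becomes positive definite modulo its radical, which suffices to establish the desired inclusion $\sigma^{\circ} \subset \eta^{+}(W_{-2})$.
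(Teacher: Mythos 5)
Your proposal is correct and follows essentially the route the paper intends: the corollary is stated without a separate proof because it follows from the construction of $\breve{F}$ in Proposition \ref{surj} (which only needs $\dim\Im N\leq\dim S=2\leq h^{0,-1}$ and a nilpotent orbit for $\scrH$, hence gives nonemptiness of $\BB(\sigma)$) together with Lemma \ref{even-odd}. Your extra care on the faces --- checking directly that $\phi_N$, positive semidefinite on $H_{\RR}/S^{\perp}$ with radical $(\Im N)^{\perp}/S^{\perp}$, descends to a positive definite form on $H_{\RR}/(\Im N)^{\perp}$ so that $\sigma^{\circ}\subset\eta^{+}(W_{-2})$ --- is a valid filling-in of a step the paper leaves implicit, not a different argument.
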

\begin{rem}
The above proposition does not hold for an odd-type cone. See the example of \S\ref{exm}.
\end{rem}

For a fan $\Sigma$ of nilpotent cones, we define
$$D_{\Sigma}=\bigsqcup_{\sigma\in\Sigma}\BB(\sigma).$$
Then for an even-type (resp. odd-type) fan $\Sigma$ we can define the map
$$p^{\ev}:D_{\Sigma}\to \scrH_{\Sigma}\quad (\text{resp. }p^{\od}:D_{\Sigma}\to \bar{\scrH}_{\Sigma}),$$
and for a subgroup $\Gamma$ of $G_{\ZZ}$ which is compatible with $\Sigma$ we have
$$p^{\ev}:\Gamma\bs D_{\Sigma}\to \Gamma\bs \scrH_{\Sigma}\quad (\text{resp. }p^{\od}:\Gamma\bs D_{\Sigma}\to \overline{\Gamma\bs \scrH}_{\Sigma}),$$
where $p^{\ev}=id$ if $D=\scrH$.




%



Let $\Gamma$ be an arithmetic subgroup of $G_{\RR}$ and let $\Sigma_{\tor}$ be a $\Gamma$-admissible family of polyhedral decompositions.
Then $\Gamma\bs D_{\Sigma_{\tor}}$ is called a toroidal compactification.
A toroidal compactification is compact, and moreover it is smooth if $\Gamma$ is neat.
By Lemma \ref{even-odd}, for $\sigma\in \Sigma_{\tor}$, $\sigma$ is even-type and $-\sigma$ is odd-type if it has nilpotent orbit.
We then have the maps
\begin{align}\label{map}
p^{\ev}:\Gamma\bs D_{\Sigma_{\tor}}\to \Gamma\bs \scrH_{\Sigma_{\tor}}, \quad p^{\od}:\Gamma\bs D_{\Sigma^{-}_{\tor}}\to \overline{\Gamma\bs \scrH}_{\Sigma^{-}_{\tor}}
\end{align}
where $\Sigma^{-}_{\tor}$ is the fan of $-\sigma$ for all $\sigma\in \Sigma_{\tor}$.
As we will see at \S \ref{exm}, these maps may not be surjective.
   
Now 
$$\Sigma_{\tor}=\bigcup_{Q(S,S)=0}\Sigma(S).$$
We then define the subfan
\begin{align}\label{even}
\Sigma_{\ev}=\bigcup_{\begin{subarray}{c}Q(S,S)=0,\\ \dim{S}\leq h^{0,-1}\end{subarray}}\Sigma(S)
\end{align}
of $\Sigma_{\tor}$.
By Proposition \ref{surj}, we have the following corollary
\begin{cor}
$p^{\ev}:\Gamma\bs D_{\Sigma_{\ev}}\to \Gamma\bs\scrH_{\Sigma_{\ev}}$ is surjective.
\end{cor}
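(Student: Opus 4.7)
The plan is to apply Proposition \ref{surj} cone by cone. Since $\Sigma_{\ev}$ is $\Gamma$-stable and $p^{\ev}$ is $\Gamma$-equivariant, surjectivity of $\Gamma\bs D_{\Sigma_{\ev}}\to \Gamma\bs\scrH_{\Sigma_{\ev}}$ reduces to surjectivity of $p^{\ev}:\BB(\sigma)\to\BB_{\tor}(\sigma)$ for every $\sigma\in\Sigma_{\ev}$ with $\BB_{\tor}(\sigma)\neq\emptyset$. This is exactly what Proposition \ref{surj} asserts, provided that its two hypotheses---being even-type and $\dim\Im N\leq h^{0,-1}$ for $N\in\sigma^{\circ}$---are satisfied by every such $\sigma$.

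Fix $\sigma\in\Sigma(S)$ with $Q(S,S)=0$ and $\dim S\leq h^{0,-1}$. Since $\sigma\subset\Cl(\eta^+(S))\subset\eta(S)$, every $N\in\sigma$ has $\Im N\subset S$ and $N^{2}=0$, so the dimensional hypothesis $\dim\Im N\leq h^{0,-1}$ is immediate. For the even-type hypothesis, it suffices by Lemma \ref{even-odd} to verify $\sigma^{\circ}\subset\eta^+(W(\sigma)_{-2})$. Since $\sigma$ generates an $\scrH$-nilpotent orbit, the monodromy weight filtration centered at $-1$, combined with $N^{2}=0$, yields $W_{-2}=\Im N$ and $W_{-1}=\Ker N$ for $N\in\sigma^{\circ}$. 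The Hodge-theoretic polarization attached to the $\scrH$-orbit, applied to $I^{0,0}\subset\gr^{W}_{0}$ via (\ref{pol}), is nothing but the positivity of $\phi_{N}$ on $H_{\RR}/W_{-1}=H_{\RR}/W_{-2}^{\perp}$, and this reads $N\in\eta^+(W_{-2})$.

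Having verified the hypotheses, Proposition \ref{surj} produces, for each $(\sigma,\exp(\sigma_{\CC})F)\in\BB_{\tor}(\sigma)$, a preimage $(\sigma,\exp(\sigma_{\CC})\breve F)\in\BB(\sigma)$, and passage to the $\Gamma$-quotient yields the corollary. The only point requiring care is the even-type verification, i.e., matching the ``toroidal'' positivity $N\in\eta^+(S)$ with the ``Hodge-theoretic'' positivity $N\in\eta^+(W_{-2})$ of Definition \ref{ev-od}. The key observation is that, for $N\in\sigma^{\circ}\cap\eta^+(S)$, positivity of $\phi_{N}$ on $H_{\RR}/S^{\perp}$ forces $\Ker N=S^{\perp}$, hence $\Im N=S$ by nondegeneracy of $Q$; thus $W_{-2}=S$ and the two positivity conditions coincide, at which point Lemma \ref{even-odd} closes the argument.
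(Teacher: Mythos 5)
Your proposal is correct and follows the paper's own route: the paper derives this corollary directly from Proposition \ref{surj} applied cone by cone to $\Sigma_{\ev}=\bigcup_{\dim S\leq h^{0,-1}}\Sigma(S)$, with the even-type hypothesis supplied (as in your argument) by Lemma \ref{even-odd} via the identification $W(\sigma)_{-2}=\Im N=S$ for $N\in\sigma^{\circ}\subset\eta^+(S)$. Your explicit verification that $\Ker N=S^{\perp}$ and hence $\Im N=S$ merely spells out a step the paper leaves implicit.
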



\subsection{Boundary component structure}
Boundary component structure of toroidal compactifications are well-known.
Using it, we show some boundary component structure for even-type cones. 
  
Let $\sigma$ be a cone such that $\sigma^{\circ}\subset\eta^+(S)$ for some isotropy subspace $S$.  
Now $\eta(S)\cong \Sym(m,\RR)$ with $m=\dim{S}$.
There exists a subspace $Z_{\sigma}$ of $\eta(S)_{\CC}$ satisfying $\eta(S)_{\CC}=Z_{\sigma}\oplus \sigma_{\CC}$.
Let $\BB_{\Sat}(S)$ be the Satake boundary component corresponding to $S$.
Then $\BB_{\Sat}(S)$ is isomorphic to the Siegel space $\scrH'$ of degree $n-m$.
We have
$$\exp{(\eta(S)_{\CC})}\scrH\cong \BB_{\Sat}(S)\times \CC^k\times Z_{\sigma}\times \sigma_{\CC}$$
where $k=m\times (n-m)$.
Here $\BB_{\tor}(\sigma)\cong \exp{(\sigma_{\CC})}\scrH/\exp{(\sigma_{\CC})}$ which is an open subspace of 
\begin{align}\label{siegel3}
\exp{(\eta(S)_{\CC})}\scrH/\exp{(\sigma_{\CC})}\cong \BB_{\Sat}(S)\times \CC^k\times Z_{\sigma}
\end{align}
(cf. \cite[\S 5]{H1}).

For $(\sigma,\exp{(\sigma_{\CC})}F)\in \BB_{\tor}(\sigma)$, the projection $\zeta :\BB_{\tor}(\sigma)\to\BB_{\Sat}(\sigma)\cong\scrH'$ is given by
$$(\sigma,\exp{(\sigma_{\CC})}F)\to\gr^{W(\sigma)}_{-1}(F).$$ 
Now for $D'$ of (\ref{prime}) and the even map $p'^{\ev}:D'\to\scrH'\cong \BB_{\Sat}(S)$, 
we have the fibered product
$$\xymatrix{
\BB'(\sigma)\ar@{->}[d]\ar@{->}[r]&\BB_{\tor}(\sigma) \ar@{->}[d]^{\zeta}\\
D'\ar@{->}[r]_{p'^{\ev}}&\BB_{\Sat}(\sigma)
}.
$$
If $m\leq h^{0,-1}$, by the construction of $\breve{F}$ of (\ref{brave}), we have
$$\BB'(\sigma)\hookrightarrow \BB(\sigma);\quad (F',(\sigma,\exp{(\sigma_{\CC})}F))\mapsto (\sigma,\exp{(\sigma_{\CC})}\breve{F}).$$
As a subspace of $\BB(\sigma)$, we have
$$\BB'(\sigma)=\{(\sigma,\exp{(\sigma_{\CC})}F)\in\BB(\sigma)\;|\; h^{p,-p}=0\text{ for }(W(\sigma),F)\text{ if }p\neq 0\}.$$
In fact, the maps
$$p^{\ev}|_{\BB'(\sigma)}:\BB'(\sigma)\to\BB_{\tor}(\sigma),\quad \gr^{W(\sigma)}_{-1}:\BB'(\sigma)\to D'$$
are those appearing in the fibered product diagram above. 
If the level of Hodge structure is $3$, i.e. $h^{p,-p-1}=0$ if $p>1$, a possible even-type nilpotent orbit is of type of $\BB'(\sigma)$.
Therefore $\BB(\sigma)=\BB'(\sigma)$ if $D$ is a period domain for a Hodge structure of level $3$. 

Using this boundary component structure, we have an estimate of the dimension of $\BB(\sigma)$.
The fiber of $D'\to \scrH'$ is the compact complex manifold
$$C'=U(n-m)/\prod_{p\geq 0} U(h'^{p,-p-1})$$
where $h'^{p,-p-1}$ is the Hodge number for $D'$ and
$$\dim{\BB(\sigma)}\geq\dim{\BB'(\sigma)}=\dim{\BB_{\tor}(\sigma)}+\dim_{\CC}{C'}.$$
Here $\dim{\BB_{\tor}(\sigma)}$ can be calculated by (\ref{siegel3}), which yields 

\begin{prop}  
Let $\sigma $ be an even-type nilpotent cone with $m=\dim{\Im{N}}\leq h^{0,-1}$ for $N\in\sigma^{\circ}$. Then 
\begin{align*}
\dim{\BB(\sigma)}\geq \frac{1}{2}(n-m)(n-m+1)+m(n-m)+\frac{1}{2}m(m+1)-\dim_{\RR}{\sigma_{\RR}}\\
+\frac{1}{2}\large((n-m)^2-\sum_{p\geq 0}(h'^{p,-p-1})^2\large).
\end{align*}
If $D$ is a period domain for a Hodge structure of level $3$, the above inequality is an equality. 
\end{prop}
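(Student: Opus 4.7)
\end{prop}

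The plan is to use the inclusion $\BB'(\sigma) \hookrightarrow \BB(\sigma)$ constructed earlier in this subsection to reduce the lower bound to the computation of $\dim \BB'(\sigma)$, and then to read off the latter from the fibered product diagram. The equality in the level-$3$ case will follow from the identification $\BB(\sigma) = \BB'(\sigma)$ mentioned earlier.

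First I would compute $\dim_{\CC}\BB_{\tor}(\sigma)$ using the identification (\ref{siegel3}). Since $\BB_{\tor}(\sigma)$ is open in $\BB_{\Sat}(S) \times \CC^{k} \times Z_{\sigma}$, its complex dimension is the sum of the dimensions of the three factors. The Siegel factor $\BB_{\Sat}(S) \cong \scrH'$ contributes $\tfrac{1}{2}(n-m)(n-m+1)$, the middle factor contributes $k = m(n-m)$, and the splitting $\eta(S)_{\CC} = Z_{\sigma} \oplus \sigma_{\CC}$ together with $\eta(S) \cong \Sym(m,\RR)$ and the reality of the cone $\sigma$ yields $\dim_{\CC} Z_{\sigma} = \tfrac{1}{2}m(m+1) - \dim_{\RR}\sigma_{\RR}$.

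Next I would compute the complex dimension of the fiber $C' = U(n-m)/\prod_{p \geq 0} U(h'^{p,-p-1})$ of the compact homogeneous projection $D' \to \scrH'$: the standard real-dimension formulas for unitary groups give $\dim_{\CC} C' = \tfrac{1}{2}\bigl((n-m)^{2} - \sum_{p \geq 0}(h'^{p,-p-1})^{2}\bigr)$. Because $\BB'(\sigma)$ is the fibered product of $D'$ and $\BB_{\tor}(\sigma)$ over $\BB_{\Sat}(S) \cong \scrH'$, the fibration $p^{\ev}|_{\BB'(\sigma)}\colon \BB'(\sigma) \to \BB_{\tor}(\sigma)$ has fiber $C'$, so $\dim \BB'(\sigma) = \dim \BB_{\tor}(\sigma) + \dim_{\CC} C'$. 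Summing these contributions yields precisely the right-hand side of the stated inequality, and together with the trivial bound $\dim \BB(\sigma) \geq \dim \BB'(\sigma)$ this completes the general case.

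For the equality claim, I would invoke the level-$3$ hypothesis $h^{p,-p-1} = 0$ for $|p| > 1$, which forces $I^{p,q} = 0$ whenever $p$ or $q$ lies outside $\{-2,-1,0,1\}$. In particular the weight-$0$ pieces $I^{p,-p}$ of a LMHS $(W(\sigma),F)$ can be nonzero only for $p \in \{-1,0,1\}$, and the even-type condition rules out $p = \pm 1$. Thus every orbit in $\BB(\sigma)$ already lies in $\BB'(\sigma)$, so $\BB(\sigma) = \BB'(\sigma)$ and the inequality saturates. The main subtlety is this level-$3$ identification $\BB(\sigma) = \BB'(\sigma)$; the remainder is routine bookkeeping of dimensions of Siegel spaces, unitary quotients, and $Z_{\sigma}$.
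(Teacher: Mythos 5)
Your proposal is correct and follows essentially the same route as the paper: the bound comes from $\BB(\sigma)\supset\BB'(\sigma)$ together with $\dim\BB'(\sigma)=\dim\BB_{\tor}(\sigma)+\dim_{\CC}C'$, with $\dim\BB_{\tor}(\sigma)$ read off from (\ref{siegel3}) and $\dim_{\CC}C'$ from the unitary quotient, and the level-$3$ equality from $\BB(\sigma)=\BB'(\sigma)$. Your accounting of $\dim_{\CC}Z_{\sigma}=\tfrac12 m(m+1)-\dim_{\RR}\sigma_{\RR}$ and the level-$3$ vanishing of $I^{p,-p}$ for $p\neq 0$ matches the paper's (largely implicit) argument.
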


Let $\Gamma$ be a neat arithmetic subgroup.
We define
$$P_S=\{g\in G_{\RR}\;|\; gS=S\},\quad P_{\sigma}=\{g\in G_{\RR}\;|\; \Ad{(g)}\sigma=\sigma\}.$$
Then $P_{\sigma}$ is a subgroup of $P_S$.
By \cite[Proposition 4.10]{N}, we have surjective maps
$$p_{h}:P_{S}\to \Aut{(\BB_{\Sat}(S))},\quad p_{\ell}: P_{S}\to \Aut{(\eta^+(S),\eta(S))}$$
and $P_S$ can be written as the semi-direct product
\begin{align}\label{semi-direct}
P_S\cong W_S\rtimes(p_h(P_S)\times p_{\ell}(P_S))
\end{align}
where $W_{S}$ is the unipotent radical of $P_S$.
The maps $p_{h}$ and $p_{\ell}$ satisfy  
$$\zeta(gx)=p_{h}(g)\zeta (x),\quad \Ad{(g)}y=p_{\ell}(g)y$$
for $g\in P_S$, $x\in \BB_{\tor}(\sigma)$ and $y\in\eta(S)$.
Then $\Gamma_{\sigma}=\Gamma\cap P_{\sigma}$ can be written as a semi-direct product
$$\Gamma_{\sigma}\cong (W_S\cap\Gamma(S))\rtimes \left( p_h(\Gamma(S))\times (p_{\ell}(\Gamma_{\sigma}))\right).$$
Now $\Gamma'=p_{h}(\Gamma (S))$ is an arithmetic subgroup of $\Aut{(\BB_{\Sat}(S))}\cong Sp(n-m,\RR)$ and then $\Gamma'\bs \BB_{\Sat}(\sigma)$ is a Siegel modular variety.
Forming the quotient, we obtain the fibered product
$$\xymatrix{
\Gamma_{\sigma}\bs\BB'(\sigma)\ar@{->}[d]\ar@{->}[r]&\Gamma_{\sigma}\bs\BB_{\tor}(\sigma) \ar@{->}[d]\\
\Gamma'\bs D'\ar@{->}[r]&\Gamma'\bs\BB_{\Sat}(\sigma)
}
$$

\begin{lem} 
$p'^{\ev}:\Gamma'\bs D'\to \Gamma'\bs \BB_{\Sat}(\sigma)$ is a fiber bundle whose fiber is $C'$.
\end{lem}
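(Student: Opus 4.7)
The plan is to realize $p'^{\ev}:D'\to\scrH'$ as a $G'$-equivariant homogeneous fiber bundle with fiber $C'$, where $G' = Sp(n-m,\RR)$, and then descend the local trivializations to the $\Gamma'$-quotient by using freeness of the $\Gamma'$-action. Using the identifications in (\ref{prime}), one writes $D' = G'/L'$ and $\scrH' = G'/K'$ with $K' = U(n-m)$ and $L' = \prod_{p} U(h'^{p,-p-1}) \subset K'$, so that $p'^{\ev}$ is the natural projection $G'/L' \to G'/K'$. Since the principal $K'$-bundle $G' \to G'/K'$ admits local smooth sections, over any sufficiently small open $U \subset \scrH'$ one obtains a trivialization $(p'^{\ev})^{-1}(U) \cong U \times K'/L' = U \times C'$. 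Thus $p'^{\ev}$ itself is a real-analytic fiber bundle with fiber $C'$.

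Next, I would verify that $\Gamma'$ acts freely and properly discontinuously on $\scrH'$. The subgroup $\Gamma(S) \subset \Gamma$ is neat because $\Gamma$ is, and by Borel's theorem that neatness is preserved under algebraic homomorphisms, the image $\Gamma' = p_h(\Gamma(S))$ is a neat arithmetic subgroup of $G'$. A neat subgroup of $Sp(n-m,\RR)$ has no nontrivial elements of finite order, hence acts without fixed points on $\scrH'$, and properness of the action follows from arithmeticity. For any $[x] \in \Gamma'\bs\scrH'$ I would then pick a lift $x \in \scrH'$ and a neighborhood $U$ of $x$ small enough that $\scrH' \to \Gamma'\bs\scrH'$ restricts to a homeomorphism on $U$. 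Since $\Gamma'$ acts freely on $D'$ as well (its action commutes with $p'^{\ev}$, and the fiber action through $K'/L'$ is compatible), the preimage $(p'^{\ev})^{-1}(U)$ in $D'$ maps homeomorphically onto its image in $\Gamma'\bs D'$, and the trivialization of $p'^{\ev}$ over $U$ descends to one of the quotient map with the same fiber $C'$.

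The main obstacle I expect is precisely the inheritance of neatness through $p_h$: verifying that $\Gamma'$ is neat and hence acts freely on $\scrH'$ is where the hypothesis on $\Gamma$ is used essentially, and the appeal to Borel's preservation theorem should be spelled out with care, especially since $p_h$ is a morphism into $\Aut(\BB_{\Sat}(S))$ rather than directly to a linear algebraic group. Once this is secured, descending the local trivializations from $D' \to \scrH'$ to the $\Gamma'$-quotients is formal, since any $G'$-equivariant homogeneous fiber bundle quotiented by a discrete group acting freely and properly on both base and total space remains a fiber bundle with the same fiber.
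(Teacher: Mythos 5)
Your proposal is correct and follows essentially the same route as the paper: both identify the fiber as the orbit $K'/L'\cong C'$ of the maximal compact subgroup and reduce everything to showing that $\Gamma'$ meets each conjugate of $K'$ trivially, the paper doing this by noting that a nontrivial $g\in K'_0\cap\Gamma'$ has infinite order by neatness and so generates an infinite subset of the compact group $K'_0$ with a convergent subsequence, contradicting proper discontinuity. The only point you add beyond the paper is the explicit justification that $\Gamma'=p_h(\Gamma(S))$ inherits neatness from $\Gamma$, which the paper uses without comment; your instinct that this deserves care is sound, but it is the standard fact that images of neat subgroups under morphisms of algebraic groups are neat.
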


\begin{proof}
For $F'_0\in D'$, we have the maximal compact subgroup $K'_0$ containing the isotropy subgroup at $F'_0$.
The orbit $K'_0F'_0$ is the fiber of $p'^{\ev}:D'\to\BB_{\Sat}(\sigma)$ at $p'^{\ev}(F'_0)$.
If there exists $1\neq g\in K'_0\cap\Gamma'$, $\{g,g^2,g^3,\ldots\}$ gives an infinite sequence in $K'_0$  since $\Gamma'$ is neat.
Now since $K'_0$ is compact, there exists a subsequence which converges to a point in $K'_0$.
However this contradicts the proper discontinuity of the action of $\Gamma'$.
\end{proof}
In conclusion, we have the following theorem:
\begin{thm}\label{main}
$p^{\ev}:\Gamma_{\sigma}\bs \BB'(\sigma)\to\Gamma_{\sigma}\bs\BB_{\tor}(\sigma)$ is a real-analytic fibration.
Moreover $\Gamma_{\sigma}\bs \BB'(\sigma)$ is fibered over a Siegel modular variety.
\end{thm}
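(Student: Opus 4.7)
My plan is to deduce the theorem by transferring the fiber bundle structure provided by the preceding Lemma across the fibered product diagrams already displayed, using the fact that fiber bundles in the real-analytic category are stable under pullback.

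First, I would observe that the (second) fibered product square placed just before the theorem exhibits $p^{\ev}:\Gamma_{\sigma}\bs\BB'(\sigma)\to\Gamma_{\sigma}\bs\BB_{\tor}(\sigma)$ as the pullback of $p'^{\ev}:\Gamma'\bs D'\to\Gamma'\bs\BB_{\Sat}(\sigma)$ along the projection $\Gamma_{\sigma}\bs\BB_{\tor}(\sigma)\to\Gamma'\bs\BB_{\Sat}(\sigma)$ induced by $\zeta$ together with $p_h:P_S\to\Aut(\BB_{\Sat}(S))$. Once this Cartesian presentation is accepted, the Lemma immediately preceding the theorem tells us that the bottom map is a real-analytic fiber bundle with fiber $C'=U(n-m)/\prod_{p\geq 0}U(h'^{p,-p-1})$; pulling back along a real-analytic map then yields that $p^{\ev}$ is itself a real-analytic fibration with the same fiber $C'$, which is the first assertion.

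For the second assertion, I would simply compose the fibration $p^{\ev}$ with the projection $\Gamma_{\sigma}\bs\BB_{\tor}(\sigma)\to\Gamma'\bs\BB_{\Sat}(\sigma)$. By construction $\BB_{\Sat}(\sigma)\cong\scrH'$ is the Siegel space of degree $n-m$, and $\Gamma'=p_h(\Gamma(S))$ is an arithmetic subgroup of $Sp(n-m,\RR)$, so $\Gamma'\bs\BB_{\Sat}(\sigma)$ is a Siegel modular variety. Composing the two fibrations exhibits $\Gamma_{\sigma}\bs\BB'(\sigma)$ as fibered over this Siegel modular variety, proving the theorem.

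The main obstacle is the first step, i.e.\ verifying that passing to $\Gamma_{\sigma}$-quotients really does produce a Cartesian diagram. This uses the semi-direct product decomposition (\ref{semi-direct}) in an essential way: only $p_h(\Gamma(S))=\Gamma'$ acts non-trivially on the Satake factor $\BB_{\Sat}(\sigma)$, whereas $W_S\cap\Gamma(S)$ and $p_{\ell}(\Gamma_{\sigma})$ preserve the fibers of $\zeta$, acting along the $\CC^{k}\times Z_{\sigma}$-direction of the description (\ref{siegel3}) and along the $D'$-direction coming from the $K'$-factor of $Sp(n-m,\RR)$. After checking that each factor of $\Gamma_{\sigma}$ acts compatibly on all four corners, the descent to the quotient square follows formally, and the rest of the argument is a direct application of the Lemma.
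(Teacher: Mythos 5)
Your proposal follows essentially the same route as the paper, which states the theorem as the conclusion of the preceding setup: the Lemma makes $p'^{\ev}:\Gamma'\bs D'\to\Gamma'\bs\BB_{\Sat}(\sigma)$ a fiber bundle with fiber $C'$, the quotient fibered-product square transfers this to $p^{\ev}:\Gamma_{\sigma}\bs\BB'(\sigma)\to\Gamma_{\sigma}\bs\BB_{\tor}(\sigma)$ by pullback, and composing with $\Gamma_{\sigma}\bs\BB_{\tor}(\sigma)\to\Gamma'\bs\BB_{\Sat}(\sigma)$ gives the fibration over the Siegel modular variety. Your added attention to why the quotient square remains Cartesian (via the semi-direct product decomposition and the fact that only $\Gamma'=p_h(\Gamma(S))$ moves the Satake factor) is a point the paper passes over silently, and is worth making explicit.
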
 

\begin{rem}\label{remKP}
In \cite[Proposition 4.2]{KP}, Kerr and Pearlstein showed that a Mumford-Tate group for a boundary component can be written as a semi-direct product similarly to (\ref{semi-direct}).
Moreover they showed a fibration structure for some boundary components in \cite[Proposition 7.4]{KP}.
If $M(\RR)=G_{\RR}$, the Mumford-Tate group $M_{\BB(\sigma)}(\RR)$ for the boundary component $\BB(\sigma)$ is contained in $P_{\sigma}$, and the Lie subgroup $G_{\BB(\sigma)}(\RR)$ of \cite[Definition 4.1]{KP} contains $p_h(P_S)$, which acts on 
$D'$ transitively.
Then the Lie algebra $\frakg_{\BB(\sigma)}$ is not of the type $(-1,1)+(0,0)+(1,-1)$ because of the Hodge numbers of $D'$ unless $D=\scrH$.
Therefore, our situation is outside the setting of \cite[Proposition 7.4]{KP} if $D\neq \scrH$.
If $D= \scrH$, then $\BB(\sigma)=\BB_{\tor}(\sigma)$. 
\end{rem}

\subsection{Example}\label{exm}
We consider the case where $h^{p,-p-1}=1$ if $p=1,0,-1,-2$, $h^{p,-p-1}=0$ otherwise (the case for Hodge structures of Calabi-Yau threefolds with $h^{2,1}=1$).
In this case
$$G_{\RR}\cong Sp(2,\RR),\quad L\cong U(1)\times U(1),\quad K\cong U(2).$$
In  \cite[\S 12.3]{KU}, this case is well-studied.
Any nilpotent cone in this case is rank $1$, and its generator $N$ is classified as follows:
\begin{itemize}
\item[(I)] $N^2=0,\;\dim{(\Im{N})}=1$;
\item[(II)] $N^2=0,\;\dim{(\Im{N})}=2$;
\item[(III)] $N^3\neq 0,N^4=0$.
\end{itemize}
LMHS of type-I and type-II are described as follows:
\begin{center}
\begin{tabular}{ccc}
$
\xymatrix{
&\stackrel{(0,0)}{\bullet}\ar@{->}[dd]^N& \\ \stackrel{(1,-2)}{\bullet}& & \stackrel{(-2,1)}{\bullet} \\ &\stackrel{(-1,-1)}{\bullet}&
}$&$\qquad$&
$\xymatrix{
\stackrel{(1,-1)}{\bullet}\ar@{->}[dd]^N&& \stackrel{(-1,1)}{\bullet}\ar@{->}[dd]^N\\ \\\stackrel{(0,-2)}{\bullet}&& \stackrel{(-2,0)}{\bullet} 
}$\\
(I)&$\qquad$
&(II)\\
\end{tabular}
\end{center}
Then a type-I cone is even-type and a type-II cone is odd-type.
The fan of all possible type-I cones is
$$\Sigma_1=\{\RR_{\geq 0}N\;|\; N\in \frakg_{\QQ},\; N^2=0,\; \dim{(\Im{N})}=1,\; \phi_N>0 \text{ on }H_{\RR}/\Ker{N} \}.$$
For a generator $N$ of $\sigma\in\Sigma_1$, $\eta(\Im{N})$ is $1$-dimensional, and then $\eta^+(\Im{N})=\RR_{> 0}N$.
Then $\Sigma_1=\Sigma_{\ev}$ of (\ref{even}).

On the other hand, for the fan $\Sigma_2$ of all possible type-II cones, all cones of $\Sigma_2$ are rank $1$.
For a generator $N$ of $\sigma\in\Sigma_2$, $\dim{(\Im{N})}=2$ and $\dim{(\eta(\Im{N}))}=3$.
Then $\Sigma_2$ does not contain a polyhedral decomposition of $\eta^+(\Im{N})$.

For a type-I cone $\sigma_1$, $\BB(\sigma_1)$ coincides with $\BB'(\sigma_1)$.
Now if $D'$ is the period domain with $h^{1,-2}=h^{-2,1}=1$, then $D'$ is isomorphic to the complex 
conjugate $\bar{\scrH}_1$ of the upper half plane.
Moreover $\BB_{\tor}(\sigma_1)\cong \scrH_1\times \CC$ and $\BB_{\Sat}(\sigma_1)\cong \scrH_1$.
Therefore the boundary component $\BB(\sigma_1)$ is the fiber product
$$\xymatrix{
\BB(\sigma_1)\ar@{->}[d]\ar@{->}[r]&\scrH_1 \times \CC \ar@{->}[d]^{\mathrm{proj.}}\\
\bar{\scrH}_1\ar@{->}[r]_{\mathrm{conj.}}&\scrH_1
}
$$
If we take a quotient by a neat subgroup $\Gamma_{\sigma_1}$, the right vertical arrow 
\begin{align}\label{modular}
\Gamma_{\sigma_1}\bs (\scrH_1\times \CC)\to \Gamma'\bs \scrH_1
\end{align}
is the canonical elliptic fibration over the Siegel modular curve $\Gamma'\bs \scrH_1$.

In this case, $\Aut{D'}\cong SL(2,\RR)$ is $\gr^{W}_0M_{B(N_2)}(\RR)$ of \cite[Example 8.2]{KP}.
They define a complex structure on $SL(2,\RR)/U(1)$ as $SL(2,\RR)/U(1)\cong\scrH_1$.
Therefore, in \cite[Example 8.2]{KP}, $\BB(\sigma_1)\cong \scrH_1\times \CC$ and $\Gamma_{\sigma_1}\bs \BB(\sigma_1)$ is holomorphically fibered over the Siegel modular curve through (\ref{modular}), which has different complex structure from the one we defined.
In our setting, $p'^{\ev}:D'\to \BB_{\Sat}(\sigma_1)$ is complex conjugation, and then $\BB(\sigma_1)$ is not holomorphically but real-analytically fibered. 
The restriction $p^{\ev}$ to each even-type boundary component gives a real-analytic fiber bundle, 
however $p^{\ev}$ itself is not a fiber bundle.  

For a type-II cone $\sigma_2$, we have $\BB(\sigma_2)\cong \CC\times \{\pm 1\}$ by \cite[\S 12.3]{KU}.
Then
$$\dim{\BB(\sigma_2)}<\dim{\bar{\BB}_{\tor}(\sigma_2)}=2.$$
Therefore the map $p^{\od}$ on the boundary component is not surjective.
By \cite[Example 8.2]{KP}, $\Gamma_{\sigma}\bs\BB(\sigma_2)$ is a CM elliptic curve.





\end{document}